\theoremstyle{plain}
\newtheorem{theorem}{Theorem}
\newtheorem{lemma}[theorem]{Lemma}
\newtheorem{corollary}[theorem]{Corollary}
\theoremstyle{definition}
\theoremstyle{remark}
\newcommand\oo{\infty}
\newcommand\eps{\varepsilon}
\newcommand\fpp{fixed point property }
\newcommand\N{\mathbb{N}}
\newcommand\x{\widetilde{x}}
\newcommand\y{\widetilde{y}}
\newcommand\U{\mathcal{U}}
\newcommand\bfy{\mathbf{y}}
\newcommand\ukk{the weak$^*$ uniform Kadec-Klee property }
\title{The fixed point property via  dual space properties}
\date{\today}
\author{P.N. Dowling}
\address{Department of Mathematics and Statistics, Miami University,
 Oxford, OH 45056}
\email{dowlinpn@muohio.edu}
\author{B. Randrianantoanina}
\address{Department of Mathematics and Statistics, Miami University,
 Oxford, OH 45056} \email{randrib@muohio.edu}
\author{B. Turett}
\address{Department of Mathematics and Statistics, Miami University,
 Oxford, OH 45056}
\curraddr{Department of Mathematics and Statistics, Oakland University,
Rochester, MI 48309} \email{turett@oakland.edu}
\begin{document}

\begin{abstract}
A Banach space has
the weak fixed point property if its dual space has a
weak$^*$ sequentially compact unit ball and the dual space
satisfies the weak$^*$ uniform Kadec-Klee property; and it
has the \fpp if there exists $\varepsilon>0$ such that, for  every
 infinite subset $A$ of the unit sphere of the dual space,
 $A\cup (-A)$ fails to be $(2-\varepsilon)$-separated.  In
particular, $E$-convex Banach spaces, a class of spaces
that includes the uniformly nonsquare spaces, have the fixed
point property.
\end{abstract}

\maketitle


Determining conditions on a Banach space $X$ so that every nonexpansive
mapping from a nonempty, closed, bounded, convex subset of $X$ into itself
has a fixed point has been of considerable interest for many years.  A Banach
space has the fixed point property if, for each nonempty, closed, bounded,
convex subset $C$ of $X$, every  nonexpansive mapping of  $C$
 into itself has a fixed point.  A Banach space is said to have the weak
 \fpp if the class of sets $C$ above is restricted to the set of weakly
 compact convex sets; and a Banach space is said to have the weak$^*$
 \fpp if $X$ is a dual space and the class of sets $C$ is restricted to the set
 of weak$^*$ compact convex subsets of $X$.

A well-known open problem in Banach spaces is whether every reflexive
Banach space has the \fpp for nonexpansive mappings.  The question of
whether more restrictive classes of reflexive spaces, such as the class of
superreflexive Banach spaces or Banach spaces isomorphic to the
Hilbert space $\ell^2$, have the \fpp has also long been open
and has been investigated by many authors \cite{GK, L2, LF, MN}.
Recently, J.\ Garc{\'i}a-Falset, E.\ Llorens-Fuster, and
E.M.\ Mazcu{\~n}an-Navarro \cite{GF}
proved that uniformly nonsquare Banach spaces, a sub-class
of the superreflexive spaces, have the fixed point property.
In this article, it is shown that the larger class of $E$-convex Banach spaces
have the fixed point property.  The $E$-convex Banach spaces,
introduced by S.V.R.\ Naidu and K.P.R.\ Sastry \cite{NS}, are a
class of Banach spaces lying strictly between the uniformly nonsquare
Banach spaces and the superreflexive spaces (see also \cite{AF}).

The second geometric property of Banach spaces that is considered in
this article is \ukk in a dual Banach space.   A dual space $X^*$ has \ukk
if, for every $\eps>0$, there exists $\delta >0$ such that,
if $(x_n^*)$ is a sequence in the unit ball of $X^*$ converging weak$^*$
to $x^*$ and the separation constant
${\text{sep}} (x_n^*) \overset{\text{def}}{=} \inf\{\| x_n^* - x_m^*\| : m\ne n\} > \eps$,
then $\|x^*\| < 1 - \delta$.  It is well-known \cite{vDS} that, if $X^*$ has
weak$^*$ uniform Kadec-Klee property, then
$X^*$ has the weak$^*$ fixed point property.
If, in addition, the unit ball of $X^*$ is weak$^*$ sequentially compact, more
is true:   Theorem \ref{T:w*ukk} notes that,
if $X^*$ has weak$^*$ uniform Kadec-Klee property and the unit ball of $X^*$ is
weak$^*$ sequentially compact, then
 $X$  has the weak fixed point property.  As a consequence
of Theorem \ref{T:w*ukk}, it is noted that several nonreflexive Banach
spaces such as quotients of $c_0$ and $C(T)/A_0$, the predual of $H^1$, have the weak
fixed point property.

Since the proofs of the main theorems in this paper will require elements of the proof
that uniformly nonsquare Banach spaces have the fixed point
property, a complete proof of this known result is presented.
The proof presented here is  a distillation of the original proof
and combines elements from  \cite[Th. 2.2]{DB} and
\cite[Th. 3.3]{GF}.  Recall  that a Banach space $X$ is uniformly
nonsquare  \cite{J} if there exists $\delta>0$ such that, if $x$ and $y$
are in the unit ball of $X$, then either $\| (x+y)/2\| < 1-\delta$
or $\| (x-y)/2\| < 1-\delta$.

The general set-up in proving that a Banach space has the weak \fpp
 has, by now, become standard fare.  If a Banach space $X$
fails to have the weak fixed point property, there exists a nonempty,
weakly compact, convex set $C$ in $X$ and a nonexpansive mapping
$T:C\to C$ without a fixed point.  Since $C$ is weakly compact, it is
possible by Zorn's Lemma to find a minimal $T$-invariant,
weakly compact, convex subset $K$
of $C$ such that $T$ has no fixed point in $K$.  Since the diameter of $K$ is
positive (otherwise $K$ would be a singleton and $T$ would have a
fixed point in $K$), it can be assumed that the diameter of $K$ is $1$.
It is well-known that there exists an approximate fixed point sequence $(x_n)$
for  $T$ in $K$ and, without loss of generality, we may assume that $(x_n)$
converges weakly to 0.  For details on this general set-up, see \cite[Chapter 3]{GK}.

\begin{theorem} [ Garc{\'i}a-Falset, Llorens-Fuster, and
E.M. Mazcu{\~n}an-Navarro \cite{GF}] \label{T:uns}
Uniformly nonsquare Banach spaces have the fixed
  point property for nonexpansive mappings.
\end{theorem}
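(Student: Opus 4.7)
The plan is to proceed by contradiction using the standard minimal-set machinery outlined just before the theorem, and then engineer a pair of unit vectors that simultaneously violates the uniform nonsquareness inequality. A preliminary reduction is in order: by a classical theorem of James, uniformly nonsquare spaces are superreflexive, hence reflexive, and for reflexive $X$ the weak fixed point property coincides with the fixed point property. It therefore suffices to establish the weak FPP. Assuming for contradiction that it fails, the summarized setup yields a minimal $T$-invariant weakly compact convex set $K$ of diameter $1$, a fixed-point-free nonexpansive $T:K\to K$, and an approximate fixed point sequence $(x_n)\subset K$ with $x_n\to 0$ weakly. Since $K$ is weakly closed, $0\in K$, so $(x_n)$ lies in the closed unit ball of $X$.

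The next step is to invoke the Goebel-Karlovitz lemma: minimality of $K$ forces $\lim_n\|x_n - y\|=1$ for every $y\in K$. In particular $\|x_n\|\to 1$, and for any convex combination $y=\sum_k\alpha_k x_{n_k}$, which again lies in $K$, one likewise has $\lim_n\|x_n - y\|=1$. It is convenient to transfer this information to the ultrapower $X_{\U}$ along a free ultrafilter $\U$: the element $\x=[x_n]_{\U}$ lies in $\widetilde{K}=K_{\U}$, is fixed by the ultrapower extension $\widetilde T$, and satisfies $\|\x - y\|=1$ for every constant $y\in K$. Crucially, $X_{\U}$ inherits uniform nonsquareness with the same constant $\delta=\delta(X)>0$ as $X$.

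The heart of the argument, and the main obstacle, is to use the above data to exhibit $u,v\in B_{X_{\U}}$ with $\min(\|u+v\|,\|u-v\|)>2(1-\delta)$, directly contradicting uniform nonsquareness. The strategy, distilled from \cite{DB} and \cite{GF}, is to form suitable asymmetric combinations of $\x$, a second copy $\y$ drawn from another subsequence, and a chosen $z\in K$ with $\|z\|$ close to $1$; the Goebel-Karlovitz equalities then compute the relevant norms in closed form, allowing one to calibrate the coefficients so that both $\|u+v\|$ and $\|u-v\|$ exceed $2(1-\delta)$. Balancing these two estimates simultaneously, so that neither falls below the permissible value, is the principal technical difficulty; once accomplished, the contradiction with uniform nonsquareness of $X_{\U}$ completes the proof.
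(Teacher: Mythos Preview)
Your outline correctly identifies the overall architecture---reflexivity via James, the minimal invariant set $K$, an approximate fixed point sequence, and a contradiction with uniform nonsquareness---but the part you label ``the principal technical difficulty'' and then leave unaddressed is in fact the entire content of the proof, and the ingredients you list are not the ones that make it work.

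The paper does \emph{not} obtain the violating pair from Goebel--Karlovitz alone, nor from a point $z\in K$ with $\|z\|$ near~$1$ combined with $\x$ and a second ultrapower copy $\y$. Goebel--Karlovitz only tells you $\lim_n\|x_n-z\|=1$ for $z\in K$, which gives a \emph{difference} of norm~$1$, not~$2$; no amount of balancing coefficients in combinations of $\x$, $\y$, and such a $z$ will produce two unit vectors whose sum and difference are simultaneously close to~$2$ in norm. What is needed is a ``halving'' phenomenon: a sequence $(y_n)$ in $K$ with $\|y_n\|\approx 1$, weak limit $y$ satisfying $\|y\|\approx\tfrac12$, $\|y_n-y\|\approx\tfrac12$, and $\|y_n-y_m\|\approx\tfrac12$. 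Then $u=\tfrac{2}{1+\eps}y$ and $u_n-u_m$ are the unit-ball vectors with both $\|(u_n-u_m)\pm u\|$ close to~$2$.

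This halving is manufactured not in an ultrapower but in $\ell^\infty(X)/c_0(X)$, via the auxiliary set
\[
[W]=\bigl\{[z_n]\in[K]:\ \|[z_n]-[x_n]\|\le\tfrac12\ \text{and}\ \limsup_n\limsup_m\|z_m-z_n\|\le\tfrac12\bigr\},
\]
which is nonempty (it contains $[\tfrac12 x_n]$), closed, convex, and $[T]$-invariant. The decisive step is Lin's extension of Goebel--Karlovitz to such subsets of $[K]$: it forces $\sup_{[z_n]\in[W]}\|[z_n]\|=1$, so one can choose $[z_n]\in[W]$ with $\|[z_n]\|>1-\eps$. A weakly convergent subsequence $(y_n)$ of $(z_n)$, together with norming functionals $y_n^*$ converging weak$^*$, then yields the estimates above; the functionals are used to bound $\|(u_n-u_m)-u\|$ from below. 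None of this apparatus---the set $[W]$, Lin's lemma, or the role of the norming functionals---appears in your sketch, and without it there is no mechanism to force both $\|u+v\|$ and $\|u-v\|$ large.
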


\begin{proof}
Assume that a Banach space $X$ fails to have the fixed point property.
Since uniformly nonsquare spaces are reflexive \cite{J}, the \fpp and the
weak \fpp coincide for $X$.  Therefore there exists a nonexpansive map
$T:K\to K$ without a fixed point where $K$ is a minimal $T$-invariant set in $X$
with diameter $1$.  Let $(x_n)$ be an approximate fixed point sequence for
 $T$ in $K$ and assume that $(x_n)$
converges weakly to 0.

Consider the set in $\ell^\oo(X)/c_0(X)$ defined by
\[
[W] = \{\, [z_n] \in [K] : \Vert [z_n] - [x_n]\Vert \leq 1/2\ \text{and}\
\limsup_n \limsup_m \Vert z_m - z_n\Vert \leq
1/2\, \}\,.
\]
It is easy to check that $[W]$ is closed, bounded, convex, nonempty (since $[\frac12 x_n]$
is in the set), and $[T]$-invariant where $[T][z_n] \overset{\text{def}}{=} [T(z_n)]$.
So, by a result of Lin \cite{L},
$\displaystyle\sup_{[z_n]\in [W]} \Vert [z_n] -x \Vert = 1$ for each $x\in K$.  In particular, with
$x = 0$,
 $\displaystyle\sup_{[z_n]\in [W]} \Vert [z_n]  \Vert = 1$\,.

Let $\eps>0$ and choose $[z_n] \in [W]$ with $\Vert [z_n]\Vert > 1 - \eps$.  Let $(y_j) = (z_{n_j})$
be a subsequence of $(z_n)$ such that $\lim \Vert y_n \Vert = \Vert [z_n] \Vert$ and
$(y_n)$ converges weakly to an element $y$ in $K$.  There is no loss in generality
in assuming that
$\Vert y_n \Vert > 1 - \eps$ for all $n\in\N$ and in
choosing $y_n^*\in X^*$ so that $\Vert y_n^*\Vert = 1$,
 $y_n^*(y_n) = \Vert y_n\Vert$, and $(y_n^*)$ converges weak$^*$ to $y^*$. (This
 is possible because
the \fpp is separably-determined \cite[page 35]{GK};  so there is no loss in generality
in assuming that $B_{X^*}$ is weak$^*$-sequentially compact.)

From the definition of $[W]$
and the weak lower semicontinuity of the norm, it follows that, if $n$ is large enough,
\[
\Vert y_n - y \Vert \leq \liminf_{m} \Vert y_n - y_m \Vert < \frac{1+ \eps}{2}  \qquad \text{and}\qquad
\Vert y \Vert \leq \liminf_{j} \Vert y_j - x_{n_j} \Vert< \frac{1+ \eps}{2}\,.
\]
Therefore, with $u_n =  \frac{2}{1+\eps}(y_n - y)$ and $u = \frac{2}{1+\eps}y$,
\[
 \Vert u_n + u \Vert = \left\Vert  \frac{2}{1+\eps}(y_n - y) +  \frac{2}{1+\eps}y \right\Vert =
   \frac{2}{1+\eps} \Vert y_n\Vert >  2\,\frac{1 - \eps}{1+\eps} >
2(1-2\eps)
\]
if $n\in\N$ is large enough.
Applying the weak lower semicontinuity of the norm again, it follows that
\[
\liminf_m \Vert (u_n - u_m) + u \Vert \geq \Vert u_n + u\Vert > 2(1 - 2\eps)
\]
if $n\in\N$ is large enough.  So, by taking another subsequence if necessary, we can
assume that   $\Vert u_n + u\Vert > 2(1-2\eps)$ and $\Vert (u_n - u_m) + u \Vert  > 2(1 - 2\eps)$ for
all $n$ and all $m>n$.

Furthermore, since $y_m^*\overset{\text{w$^*$}}{\to} y^*$,

\[
\begin{aligned}
\liminf_m \Vert (u_n - u_{m}) - u \Vert &= \liminf_m \Vert (u_m + u) - u_n\Vert \\
   &\geq \liminf_m y_m^*\big( (u_m + u) - u_n\big) \\
   &= \liminf_m \big( \Vert u_m + u\Vert - y_m^*(u_n) \big) \\
   &\geq 2(1- 2\eps) - y^*(u_n)\,.
   \end{aligned}
\]
Then, since $u_n\overset{\text{w}}{\to} 0$,  it follows that
$\liminf_m \Vert (u_n - u_{m}) - u \Vert > 2(1 - 3\eps)$
if $n$ is large enough.
Therefore, for $n$ large enough and  $m>n$ also large enough,  both
\[
\Vert (u_n - u_m) + u \Vert  > 2(1 - 3\eps)\quad \text{and}\quad \Vert (u_n - u_{m}) - u \Vert > 2(1 - 3\eps)
\]
hold.
Since $\eps>0$ is arbitrary, $\Vert u_n - u_m\Vert < 1$, and $\Vert u\Vert < 1$, the above
inequalities imply that
$X$ fails to be uniformly nonsquare, a contradiction which finishes the proof.

\end{proof}

We want to refine the sequences $(x_{n_j})$, $(y_j)$, and $(y_j^*)$
that appear in the proof of Theorem \ref{T:uns}.   Recall the result of Goebel
and Karlovitz  \cite[page 124]{GK}:  If $K$ is a minimal $T$-invariant, weakly
compact, convex set for the nonexpansive map $T$ and $(x_n)$ is an approximate
fixed point sequence for $T$ in $K$, then the sequence $(\| x_n -x \|)$ converges
to the diameter of $K$ for every $x$ in $K$.

Fix $\eps >0$ and set $\x_1 = x_{n_1}$,  $\y_1=y_1$, and $\y_1^{\,*} = y_1^*$.
Then, by the Goebel-Karlovitz Lemma,  there exists $j_1>1$ such that
$\min \{ \| \x_1 - x_{n_{j_1}} \|, \| \y_1 - x_{n_{j_1}} \| \} > 1 -\eps$.  Set
$\x_2 = x_{n_{j_1}}$, $\y_2 = y_{j_1}$, and  $\y_2^{\,*} = y_{j_1}^*$.
 Another application of the
Goebel-Karlovitz Lemma yields $j_2 > j_1$ such that
$\displaystyle\min_{i = 1,2} \{ \| \x_i - x_{n_{j_2}} \|, \| \y_i - x_{n_{j_2}} \| \}> 1 -\eps$.
Set  $\x_3 = x_{n_{j_2}}$, $\y_3 = y_{j_2}$, and $\y_3^{\,*} = y_{j_2}^*$.
Continuing in this manner, we obtain sequences $(\x_n)$ and $(\y_n)$ in $K$
(and $B_X$) and
a sequence $(\y_n^{\,*})$ in $S_{X^*}$ satisfying
$\displaystyle\min_{i < n } \{ \| \x_i - \x_n \|, \| \y_i - \x_n \| \}> 1 -\eps$
for all $n\in\N$ and $\y_n^{\,*}(\y_n) = \| \y_n\| > 1 - \eps$ for all $n\in \N$\,.  In
the following, these sequences are renamed by omitting the tildes.  The following
result is a summary of several easy computations.

\begin{lemma} \label{L:numbers}
Let $X$ be a Banach space
whose dual unit ball is weak$^{\,*}$ sequentially compact
and assume that $X$ fails
 the weak fixed point property.  Given $\eps>0$, there exist sequences
$(y_n)$ in $B_X$ and $(y_n^*)$ in $S_{X^*}$ and elements $y\in B_X$ and $y^*\in B_{X^*}$
satisfying:
  \begin{enumerate}
  	\item $y_n\overset{\text{w}}{\to} y$ and $y_n^*\overset{\text{w$^*$}}{\to} y^*$;  \label{w}
	\item For every $n\in\N$, $1 - \eps < \| y_n\|  = y_n^* (y_n) \leq 1$;  \label{n-n}
	\item For every $n\in\N$, $ \frac{1 - 3\eps}{2} <  y_n^* (y) \leq \|y\| < \frac{1+\eps}{2}$;  \label{y}
	\item $\frac{1-3\eps}{2} < \|y_n - y\| < \frac{1+\eps}{2}$; \label{y-n}
	\item If $n\ne m$, then $\frac{1-3\eps}{2} < \|y_n - y_m\| < \frac{1+\eps}{2}$; \label{n-m}
	\item If $n\ne m$, then $\frac{1-3\eps}{2} < y_n^*( y_m) < \frac{1+2\eps}{2}$; \label{n*-m}
	\item $\frac{1-3\eps}{2} \leq y^*( y) \leq \frac{1+\eps}{2}$; \label{y*y}
	
  \end{enumerate}
\end{lemma}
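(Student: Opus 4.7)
The plan is to carry out the construction outlined above---use the subsequence $(y_n)$, $(y_n^*)$ and limits $y$, $y^*$ that appear in the proof of Theorem~\ref{T:uns}, apply the Goebel--Karlovitz refinement, and then take one further diagonal subsequence---and verify each of the seven estimates as a routine consequence of the defining properties of $[W]$, the norming identity $y_n^*(y_n)=\|y_n\|$, the triangle inequality, and the two modes of convergence. Since all seven bounds are of the form $(1\pm c\eps)/2$ for small constants $c$, I would run the construction with a mildly smaller auxiliary parameter in place of the $\eps$ in Theorem~\ref{T:uns} so that all resulting losses can be absorbed.

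Claims (\ref{w}) and (\ref{n-n}) are immediate from the construction. The upper halves of (\ref{y}), (\ref{y-n}), and (\ref{n-m}) are already contained in the proof of Theorem~\ref{T:uns}: namely $\|y\|<(1+\eps)/2$, $\|y_n-y\|<(1+\eps)/2$, and (after a single diagonal extraction using $\limsup_n\limsup_m\|y_n-y_m\|\le 1/2$) $\|y_n-y_m\|<(1+\eps)/2$ for $n\ne m$. The lower halves of (\ref{y}), (\ref{y-n}), and (\ref{n*-m}) then all drop out of the norming identity combined with the upper halves; for instance,
\[
y_n^*(y)=\|y_n\|-y_n^*(y_n-y)\ge\|y_n\|-\|y_n-y\|>(1-\eps)-\tfrac{1+\eps}{2}=\tfrac{1-3\eps}{2},
\]
and the identical computation with $y_m$ in place of $y$ handles the lower bound in (\ref{n*-m}), while $\|y_n-y\|\ge\|y_n\|-\|y\|$ handles that in (\ref{y-n}).

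The lower bound in (\ref{n-m}) is where the Goebel--Karlovitz refinement enters. One may arrange $\|\y_n-\x_n\|<1/2+\eps/4$ from membership of $[z_n]$ in $[W]$, while the refinement forces $\|\y_i-\x_n\|>1-\eps$ for $i<n$; hence $\|\y_i-\y_n\|\ge\|\y_i-\x_n\|-\|\x_n-\y_n\|>(1-3\eps)/2$. Item (\ref{y*y}) is immediate from $y_n^*(y)\to y^*(y)$ and the bounds in (\ref{y}). Only the upper bound in (\ref{n*-m}) asks for a fresh argument: splitting $y_n^*(y_m)=y_n^*(y)+y_n^*(y_m-y)$, I would take a final diagonal subsequence arranging $|y_n^*(y_m-y)|<\eps/2$ for all $n\ne m$, handling the case $m>n$ via weak convergence $y_m-y\to 0$ tested against the fixed functional $y_n^*$, and the case $m<n$ via weak$^*$ convergence $y_n^*\to y^*$ together with $y^*(y_m-y)\to 0$. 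This last extraction---which has to cooperate with the orderings coming from both modes of convergence as well as with the Goebel--Karlovitz refinement already performed---is the main bookkeeping obstacle, but no genuinely new idea is required beyond those already in Theorem~\ref{T:uns}.
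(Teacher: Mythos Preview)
Your proposal is correct and follows essentially the same route as the paper: items (\ref{w}), (\ref{n-n}) and the upper halves of (\ref{y})--(\ref{n-m}) come straight from the construction in Theorem~\ref{T:uns}; the lower halves of (\ref{y}), (\ref{y-n}), (\ref{n*-m}) follow from the norming identity and the triangle inequality; the lower bound in (\ref{n-m}) uses the Goebel--Karlovitz refinement together with the bound on $\|y_n-x_n\|$ coming from $[W]$; (\ref{y*y}) follows from (\ref{y}) by passing to the weak$^*$ limit; and the upper bound in (\ref{n*-m}) is obtained by a further extraction exploiting both $y_m\overset{\text{w}}{\to}y$ and $y_n^*\overset{\text{w}^*}{\to}y^*$.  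The only cosmetic difference is that the paper handles the two cases $n<m$ and $n>m$ of the upper bound in (\ref{n*-m}) by two successive subsequence passes (bounding $y_n^*(y_m)$ directly via $y_n^*(y)$ for $n<m$, and via $y^*(y_m)$ for $n>m$), whereas you package both into a single diagonal extraction through the decomposition $y_n^*(y_m)=y_n^*(y)+y_n^*(y_m-y)$; the underlying idea is the same.
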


\begin{proof}
 Claims \eqref{w} and \eqref{n-n}, the third inequality in  \eqref{y}, and the
 second inequalities in
  \eqref{y-n} and \eqref{n-m}  are immediate from the proof of Theorem \ref{T:uns}.
 Then
 \[
\|y\| \geq  y_n^*(y) = y_n^*(y_n) - y_n^*(y_n - y) > (1-\eps) - \frac{1+\eps}{2} = \frac{1-3\eps}{2}
 \]
 proving \eqref{y}.

 Also
 \[
 \| y_n -y\| \geq y_n^*(y_n - y) = \| y_n\| - y_n^*(y) > (1-\eps)  -\frac{1+\eps}{2}
 = \frac{1-3\eps}{2}
 \]
 which finishes the proof of  \eqref{y-n}.

 From our refinement of $(x_n)$ and $(y_n)$
 done just prior to the lemma and the definition of $[W]$ in the proof of Theorem \ref{T:uns},
 if $n > m$,
 \[
 \begin{aligned}
 \| y_n - y_m\| &= \| (y_n - x_n) + (x_n - y_m)\| \\
  &\geq  \|x_n - y_m\| - \| y_n - x_n\| \\
   &> (1-\eps) - \frac{1+\eps}{2} \\
   &= \frac{1 - 3\eps}{2}
 \end{aligned}
\]
showing that \eqref{n-m} holds.

The lower inequality in \eqref{n*-m} follows from \eqref{n-n} and \eqref{n-m}:
If $n\ne m$,
\[
1-\eps < y_n^*(y_n) = y_n^*(y_n - y_m) + y_n^*(y_m) \leq \|y_n - y_m\| + y_n^*(y_m)
< \frac{1+\eps}{2} + y_n^*(y_m)\,.
\]
Therefore, if $n\ne m$, $\frac{1-3\eps}{2} <  y_n^*( y_m)$.

In order to show the upper inequality in \eqref{n*-m}, we consider subsequences of
the sequences $(y_n)$ and $(y_n^*)$ obtained so far.  Note that all of the previous
conditions  will remain true  for subsequences of the current $(y_n)$ and $(y_n^*)$.
Let $\y_1 = y_1$ and $\y_1^{\,*} = y_1^*$.
Since $(y_n)$ converges weakly to $y$ and
$ \frac{1 - 3\eps}{2} <  y_1^* (y) < \frac{1+\eps}{2}$, there exists $n_1\in\N$ such
that, if $n\geq n_1$, $ \frac{1 - 3\eps}{2} <  y_1^* (y_{n}) < \frac{1+\eps}{2}$.
Set $\y_2 = y_{n_1}$ and $\y_2^{\,*} = y_{n_1}^*$.
Since $(y_n)$ converges weakly to $y$ and
$ \frac{1 - 3\eps}{2} <  y_{n_1}^* (y) < \frac{1+\eps}{2}$, there exists $n_2\in\N$ such
that, if $n\geq n_2$, $ \frac{1 - 3\eps}{2} <  y_{n_1}^* (y_{n}) < \frac{1+\eps}{2}$.
Set $\y_3 = y_{n_2}$ and $\y_3^{\,*} = y_{n_2}^*$.  Continuing in this manner
generates sequences $(\y_n)$ and $(\y_n^{\,*})$ satisfying conditions (1)--(5) and
satisfying $\y_n^{\,*}( \y_m) < \frac{1+\eps}{2}$
 if $n < m$.  Again, we simplify the notation by considering these new sequences
 but omitting the tildes in the notation.

To show the upper inequality in \eqref{n*-m} for $n>m$, first combine \eqref{y} with the
weak$^*$ convergence of $(y_n^*)$ to $y^*$ to obtain \eqref{y*y}.  Then,
since $(y_n)$ converges weakly to $y$, there is no loss in generality in
assuming that $y^*(y_n) < \frac{1 + 2\eps}{2}$ for all $n\in\N$.  In particular,
$y^*(y_1) < \frac{1 + 2\eps}{2}$.  Therefore, since $(y_n^*)$ converges
weak$^*$ to $y^*$, there exists $n_1\in\N$ such that, if $n\geq n_1$,
$y_{n}^{\,*}(y_1) < \frac{1 + 2\eps}{2}$.  Setting $\y_1 = y_1$, $\y_1^{\,*}=y_1^*$,
$\y_2 = y_{n_1}$, and $\y_2^{\,*} = y_{n_1}^*$ gives
$\y_2^{\,*}(\y_1) < \frac{1 + 2\eps}{2}$.  Then, since $y^*(\y_2) < \frac{1 + 2\eps}{2}$
and $(y_n^*)$ converges weak$^*$ to $y^*$, there exists $n_2\in\N$ such that $n_2> n_1$,
and, if $n\geq n_2$,  $y_{n}^{\,*}(\y_2) < \frac{1 + 2\eps}{2}$.  Set $\y_3 = y_{n_2}$ and
$\y_3^{\,*} = y_{n_2}^*$.  Continuing in this manner generates sequences $(\y_n)$
and $(\y_n^{\,*})$ satisfying all of the conditions of the lemma.

\end{proof}

As a consequence of these computations, we have the following

\begin{theorem}  \label{T:w*ukk}
Let $X$ be a Banach space such that $B_{X^*}$ is weak$^*$ sequentially compact.
If $X^*$ has the weak$^*$ uniform Kadec-Klee property, then $X$ has the weak fixed
point property for nonexpansive mappings.
\end{theorem}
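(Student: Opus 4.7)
The plan is to derive a contradiction from the assumption that $X$ fails the weak fixed point property, by exhibiting a weak$^*$-convergent sequence in the unit sphere of $X^*$ that violates \ukk.

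Suppose for contradiction that $X$ fails the weak \fpp. Fix $\eps_0\in(0,1/2)$, say $\eps_0=1/4$, and let $\delta_0>0$ be the quantity supplied by \ukk for this separation level. Then choose $\eps>0$ small enough that both
\[
\frac{1-4\eps}{2}>\eps_0 \qquad\text{and}\qquad \frac{1-3\eps}{1+\eps}>1-\delta_0.
\]
Apply Lemma~\ref{L:numbers} with this $\eps$ to obtain sequences $(y_n)\subset B_X$ and $(y_n^*)\subset S_{X^*}$, together with elements $y\in B_X$ and $y^*\in B_{X^*}$, satisfying conditions \eqref{w}--\eqref{y*y}.

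The first step is to estimate the separation of $(y_n^*)$. For $n\ne m$, testing $y_n^*-y_m^*$ on $y_n\in B_X$ and combining \eqref{n-n} with \eqref{n*-m} gives
\[
\|y_n^*-y_m^*\| \;\ge\; y_n^*(y_n)-y_m^*(y_n) \;>\; (1-\eps)-\frac{1+2\eps}{2} \;=\; \frac{1-4\eps}{2} \;>\; \eps_0,
\]
so $\mathrm{sep}(y_n^*)>\eps_0$. The second step is to bound $\|y^*\|$ from below: condition \eqref{y*y} yields $y^*(y)\ge (1-3\eps)/2>0$, condition \eqref{y} yields $\|y\|<(1+\eps)/2$, and since $y\ne 0$,
\[
\|y^*\| \;\ge\; \frac{y^*(y)}{\|y\|} \;>\; \frac{1-3\eps}{1+\eps} \;>\; 1-\delta_0.
\]

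The conclusion is immediate: since $(y_n^*)$ lies in $B_{X^*}$, converges weak$^*$ to $y^*$ by \eqref{w}, and has separation exceeding $\eps_0$, \ukk forces $\|y^*\|<1-\delta_0$, which contradicts the lower bound just established. The substantive content of the argument has already been packed into Lemma~\ref{L:numbers}; the only subtle point that remains is recognizing that inequality \eqref{n*-m} is precisely what is needed to separate $y_n^*$ from $y_m^*$ by testing on $y_n$, so the main obstacle was the construction of the lemma rather than this concluding passage.
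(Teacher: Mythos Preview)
Your proof is correct and follows the same overall route as the paper: invoke Lemma~\ref{L:numbers}, estimate $\mathrm{sep}(y_n^*)$, bound $\|y^*\|$ from below via \eqref{y} and \eqref{y*y}, and contradict the weak$^*$ uniform Kadec--Klee property.  The one difference is in the separation estimate: you test $y_n^*-y_m^*$ on $y_n$ to obtain $\mathrm{sep}(y_n^*)>\tfrac{1-4\eps}{2}$, whereas the paper tests on $(y_n-y_m)/\|y_n-y_m\|$ and, using \eqref{n-m} in addition to \eqref{n-n} and \eqref{n*-m}, obtains the sharper bound $\|y_n^*-y_m^*\|>2\,\tfrac{1-4\eps}{1+\eps}>2-10\eps$.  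Either estimate suffices for this theorem, but the paper's stronger one (separation arbitrarily close to~$2$) is what is subsequently exploited to prove Theorem~\ref{T:O-conv}.
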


\begin{proof}
If $X$ fails to have the weak fixed point property, consider the sequences
$(y_n)$ and $(y_n^*)$ determined in Lemma \ref{L:numbers}.  In particular,
note that $\|y_n^*\| = 1$ for all $n\in\N$ and that $(y_n^*)$ converges
weak$^*$ to $y^*$.  Note also that, if $n\ne m$,
\[
\begin{aligned}
(y_n^* - y_m^*)(y_n - y_m) &= y_n^*(y_n) - y_n^*(y_m) -  y_m^*(y_n) +  y_m^*(y_m) \\
  &> (1-\eps) - \frac{1+2\eps}{2} - \frac{1+2\eps}{2} + (1-\eps) \\
  &= 1 - 4\eps
\end{aligned}
\]
It follows that
\[
2 \geq \|y_n^* - y_m^*\| \geq (y_n^* - y_m^*)\left(\frac{y_n - y_m}{\|y_n - y_m\|}\right)
  > \frac{1 - 4\eps}{(1+\eps)/2} = 2\,\frac{1 - 4\eps}{1+\eps} > 2 - 10\eps\ .
\]
Thus, if $\eps <\frac{1}{10}$,  $(y_n^*)$ is a sequence in the unit sphere of $X^*$,
$(y_n^*)$ converges weak$^*$ to $y^*$, and $\text{sep}\{y_n^*\} > 1$.  Therefore,
by the weak$^*$ uniform Kadec-Klee property of $X^*$, there exists $\delta>0$
such that
\[
\|y^*\| < 1 - \delta\,.  \tag{*}
\]

But, by \eqref{y} and \eqref{y*y},
\[
\frac{1-3\eps}{2} \leq y^*(y) \leq \|y\|\, \|y^*\| < \frac{1+\eps}{2}\, \|y^*\|\,.
\]
Therefore, if $\eps < \min\left\{\frac{1}{10}, \frac{\delta}{4}\right\}$
\[
\|y^*\| > \frac{1-3\eps}{1+\eps} > 1 - 4\eps > 1 - \delta\, ,
\]
a contradiction to (*).  Therefore $X$ has the weak \fpp for nonexpansive mappings.
\end{proof}

Of course the theorem implies that $c_0$ with its usual norm has the weak \fpp
which was a result first proven by Maurey \cite{M}.  Since $H^1$ has the weak$^*$
uniform Kadec-Klee property \cite{BDDL},
its predual $C(T)/A_0$ has the weak \fpp by this theorem.
In the same manner, since ${\mathcal C}\sb 1(H)$,  the ideal of trace class
operators on a Hilbert space $H$, has the weak$^*$ uniform Kadec-Klee
property \cite{Le}, its predual ${\mathcal C}\sb \infty(H)$, the ideal of
compact operators in $B(H)$,
 has the weak fixed point property.
Since quotients of Banach spaces with weak$^*$ sequentially compact dual
unit balls  have weak$^*$ sequentially compact dual
unit balls \cite[page~227]{D}, it is easy to check that the following holds.

\begin{corollary}  \label{T:quotient}
Let $X$ be a Banach space such that $B_{X^*}$ is weak$^*$ sequentially compact.
If $X^*$ has the weak$^*$ uniform Kadec-Klee property and $Y$ is a closed subspace
of $X$, then $X/Y$ has the weak fixed point property for nonexpansive mappings.
\end{corollary}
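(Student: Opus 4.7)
The plan is to apply Theorem \ref{T:w*ukk} directly to the quotient $X/Y$.  This requires verifying its two hypotheses for $X/Y$: that $B_{(X/Y)^*}$ is weak$^*$ sequentially compact, and that $(X/Y)^*$ has the weak$^*$ uniform Kadec-Klee property.  The first is precisely the content of the result from \cite[p.~227]{D} already invoked in the passage preceding the statement.

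For the second, the natural tool is the standard isometric identification $(X/Y)^*\cong Y^\perp\subseteq X^*$.  Under this identification, the dual norm of $x^*\in Y^\perp$ as an element of $X^*$ equals its norm as an element of $(X/Y)^*$ (a direct consequence of the definition of the quotient norm together with the fact that $x^*\vert_Y=0$), and the weak$^*$ topology on $(X/Y)^*$ coincides with the relative weak$^*$ topology that $Y^\perp$ inherits from $X^*$ (both reduce to pointwise convergence on representatives in $X$).

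With these identifications settled, the UKK property transfers immediately.  Given $\eps>0$, let $\delta>0$ be furnished by the weak$^*$ UKK property of $X^*$.  Any sequence $(x_n^*)$ in the unit ball of $(X/Y)^*$ converging weak$^*$ to some $x^*$ with $\text{sep}(x_n^*)>\eps$ corresponds, via the above identification, to such a sequence in $B_{X^*}\cap Y^\perp$, with the same weak$^*$ limit and the same separation constant.  The weak$^*$ UKK property of $X^*$ then yields $\|x^*\|_{X^*}<1-\delta$, which is the same as $\|x^*\|_{(X/Y)^*}<1-\delta$.  Thus $(X/Y)^*$ has the weak$^*$ uniform Kadec-Klee property, and Theorem \ref{T:w*ukk} applied to $X/Y$ delivers the claimed weak fixed point property.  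The only point to watch is the bookkeeping of norms and weak$^*$ topologies under the annihilator identification; there is no genuine technical obstacle.
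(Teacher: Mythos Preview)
Your proof is correct and follows exactly the route the paper intends: the paper merely states that the result ``is easy to check'' after invoking \cite[page~227]{D}, and your argument via the isometric, weak$^*$-homeomorphic identification $(X/Y)^*\cong Y^\perp$ is precisely the easy check in question.
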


We note that the corollary implies that the quotients of $c_0$ have the
weak fixed point property.  This is implicit in the work of Borwein and Sims \cite{BS}.

The authors had hoped that the sequences identified in Lemma
\ref{L:numbers} would be useful
in establishing connections between superreflexive Banach spaces
and the \fpp for nonexpansive mappings.
Consider the  sequences generated in Lemma \ref{L:numbers}
for each $\eps = \frac1k$, $k\in \N$.  That is,
 for each $k\in\N$, let $(y_{k,n})$ and $(y_{k,n}^*)$ denote the sequences
$(y_n)$ and $(y_n^*)$ constructed in Lemma \ref{L:numbers} with
$\eps = \frac1k$; let $y_{k,\infty}$ denote the weak limit of the
sequence $(y_{k,n})$; and let $y_{k,\infty}^*$ denote the weak$^*$ limit of the
sequence $(y_{k,n}^*)$.  For a non-trivial ultrafilter $\U$ on $\N$, let
$X_\U$ denote the ultrapower of $X$ with respect to $\U$.  (For
information on ultraproducts in Banach space theory, see
\cite{H} or \cite{S}.)  Define sequences $(\bfy_n)$ in $X_\U$,
$(\bfy_n^*)$ in $(X^*)_\U$, and the point $\bfy$ in
$X_\U$ by
\[
\bfy_n = (y_{1,n},\, y_{2,n},\, y_{3,n}, \cdots)_\U\, ,
\]
\[
\bfy_n^* = (y_{1,n}^*,\, y_{2,n}^*,\, y_{3,n}^*, \cdots)_\U\, ,\ \text{and}
\]
\[
\bfy = (y_{1,\infty},\, y_{2,\infty},\, y_{3,\infty}, \cdots)_\U\,.
\]
The pair of sequences  $\big(2(\bfy_n - \bfy)\big)$ and $(\bfy_n^*)$ forms
a biorthogonal system of norm-one elements in $X_\U$ and $(X^*)_\U$ and,
for each sequence $(\alpha_n)$ of nonnegative real numbers,
$\| \sum_{n=1}^\infty \alpha_n \bfy_n^*\| = \sum_{n=1}^\infty \alpha_n$.  Moreover,
as is clear from the proof of Theorem \ref{T:w*ukk},  $\|\bfy_m^* - \bfy_n^*\| = 2$ for all
$m\ne n$.   Initially, the authors felt that, if $X$ was a renorming of $\ell^2$,
 this ``positive $\ell^1$-type behavior" should not
occur in $(X^*)_\U $ since  $(X^*)_\U $ would also be a renorming of a Hilbert space.
However, as first pointed out to us by by Professor V.D.\ Milman, there do
exist renormings of $\ell^2$ with this behavior.   A second example
resulted from a discussion with Professors A.\ Pe{\l}czy{\'n}ski and
M.\ Wojciechowski.
In fact, every infinite-dimensional Banach space can be renormed to exhibit this
$\ell^1$-type behavior for nonnegative linear combinations.  To see this, let
$(x_i, x_i^*)$ in $X\times X^*$ be a biorthogonal system with $\|x_i\| = 1$
and $\|x_i^*\|\leq 2$.  (Such a biorthogonal system exists by applying  a theorem of
Ovsepian and  Pe{\l}czy{\'n}ski \cite[page 56]{D} to a separable subspace of
$X$ and then extending to functionals on all of $X$ via the Hahn-Banach theorem.)
Then
$|||x||| = \max\{ |x_1^*(x)|, \frac12 \|x\|, \displaystyle\sup_{i\ne j; \, i, j\geq 2} (\,|x_i^*(x)| + |x_j^*(x)|\, ) \}$ defines
an equivalent norm on $X$ with $||| x_1 + x_n|||=1$ and
$|||\, \sum_{n=1}^\infty \alpha_n (x_1 + x_n)\,||| =  \sum_{n=1}^\infty \alpha_n$ if $\alpha_n\geq 0$.
(For related examples, see Example 3.13 in \cite{NS} and
Theorem 7 in \cite{K}.)

Despite the above disappointment, the sequence $(\bfy_n^*)$ in $(X_\U)^*$ or the sequences
$(y_n^*)$ in $X^*$ for a given $\eps$ in Lemma \ref{L:numbers} can be used to generalize
Theorem \ref{T:uns}.  A subset $A$ of $X$ is {\it symmetrically $\eps$-separated} if
the distance between any two distinct points of $A \cup (-A)$ is at least $\eps$ and
a Banach space $X$ is {\it $O$-convex} if the unit ball $B_X$ contains no
symmetrically $(2-\eps)$-separated subset of cardinality $n$ for some $\eps>0$ and
some $n\in\N$ \cite{NS}.   $O$-convex spaces are superreflexive.  Therefore the proof
of Theorem \ref{T:w*ukk}  combines with property (3) in Lemma \ref{L:numbers} to
show that, if $X$ fails to have the fixed point property, then, for every $\eps>0$, there
exists a countably infinite set $A = \{y_1^*, y_2^*, \cdots \}$
 in the unit sphere of $X^*$ such that
$A\cup (-A)$ is $(2-\eps)$-separated.  In particular, this implies:

\begin{theorem} \label{T:O-conv}
If $X^*$ is $O$-convex, then the Banach space $X$ has the \fpp for
nonexpansive mappings.
\end{theorem}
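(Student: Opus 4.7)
The plan is to assume $X$ fails the fixed point property and to use the sequences produced by Lemma~\ref{L:numbers} to build, for every $\eps>0$, a countably infinite subset $A$ of $S_{X^*}$ such that $A\cup(-A)$ is $(2-C\eps)$-separated; choosing $\eps$ small enough will then contradict the $O$-convexity of $X^*$. Since $O$-convex spaces are superreflexive, $O$-convexity of $X^*$ forces $X^*$, and hence $X$, to be reflexive; consequently $B_{X^*}$ is weak$^*$ sequentially compact and the fixed point property for $X$ coincides with the weak fixed point property, so Lemma~\ref{L:numbers} will indeed be available.

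Given $\eps>0$, I would take $(y_n)$, $(y_n^*)$, $y$, $y^*$ as provided by the lemma and set $A=\{y_n^*:n\in\N\}\subset S_{X^*}$. The computation at the start of the proof of Theorem~\ref{T:w*ukk} already yields $\|y_n^*-y_m^*\|>2-10\eps$ for $n\ne m$. For the ``symmetric'' distances I would evaluate $y_n^*+y_m^*$ at $y$: property~\eqref{y} of Lemma~\ref{L:numbers} gives $(y_n^*+y_m^*)(y)>1-3\eps$ while $\|y\|<\frac{1+\eps}{2}$, so
\[
\|y_n^*+y_m^*\|\;\geq\;\frac{(y_n^*+y_m^*)(y)}{\|y\|}\;>\;\frac{2(1-3\eps)}{1+\eps}.
\]
Since trivially $\|y_n^*-(-y_n^*)\|=2$, every pair of distinct points of $A\cup(-A)$ is then at distance at least $2-C\eps$ for a small absolute constant $C$.

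To finish, $O$-convexity of $X^*$ furnishes some $\eps_0>0$ and $n_0\in\N$ such that $B_{X^*}$ admits no symmetrically $(2-\eps_0)$-separated subset of cardinality $n_0$. Choosing $\eps$ so small that $2-C\eps>2-\eps_0$, the infinite set $A\cup(-A)$ will contain $n_0$ points forming precisely the forbidden configuration, the required contradiction. The only mildly subtle point is the lower bound on $\|y_n^*+y_m^*\|$ from below by something close to $2$; once this is in hand, the argument is essentially a repackaging of inequalities already present in Lemma~\ref{L:numbers} and the proof of Theorem~\ref{T:w*ukk}.
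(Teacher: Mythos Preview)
Your proposal is correct and follows essentially the same route as the paper: the paper's argument is precisely that superreflexivity of $O$-convex spaces makes Lemma~\ref{L:numbers} available, that the computation in the proof of Theorem~\ref{T:w*ukk} gives $\|y_n^*-y_m^*\|>2-10\eps$, and that property~\eqref{y} of Lemma~\ref{L:numbers} yields $\|y_n^*+y_m^*\|\geq (y_n^*+y_m^*)(y)/\|y\|>2(1-3\eps)/(1+\eps)$, so $A=\{y_n^*\}$ is symmetrically $(2-C\eps)$-separated. Your only slight looseness is phrasing the contradiction as ``$A\cup(-A)$ will contain $n_0$ points forming the forbidden configuration''; strictly, it is the $n_0$-element subset $\{y_1^*,\dots,y_{n_0}^*\}\subset S_{X^*}$ that is the forbidden symmetrically $(2-\eps_0)$-separated set, but this is a matter of wording, not substance.
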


Since uniformly nonsquare Banach spaces are $O$-convex, Theorem \ref{T:O-conv}
is a generalization of Theorem \ref{T:uns}.  Naidu and Sastry \cite{NS} also characterized
the dual property to $O$-convexity.  For $\eps>0$, a convex subset $A$ of $B_X$
is an {\it $\eps$-flat} if $A \cap (1-\eps) B_X = \emptyset$.
Note that the convex hulls of the sets $\{y_1^*, y_2^*, \cdots \}$ from Lemma \ref{L:numbers}
are $3\eps$-flats.
A collection $\mathcal{D}$ of
$\eps$-flats is {\it jointly complemented} if, for each distinct $\eps$-flats $A$ and $B$ in
$\mathcal{D}$, the sets $A\cap B$ and $A\cap (-B)$ are nonempty.
Define
\[
E(n,X) = \inf \{\eps: B_X \text{ contains a jointly complemented collection of
$\eps$-flats of cardinality $n$} \}.
\]
A Banach space $X$ is {\it $E$-convex} if $E(n,X)>0$ for some $n\in\N$.
In \cite{SS}, S.\ Saejung noted that $E$-convex spaces may fail to have normal
structure and asked if $E$ convex spaces have the fixed point property.  Since a
Banach space is $E$-convex if and only if its dual space is $O$-convex,
Theorem \ref{T:O-conv} can be restated to give a positive answer to Saejung's
question.

\begin{theorem} \label{T:E-conv}
$E$-convex spaces have the \fpp for
nonexpansive mappings.
\end{theorem}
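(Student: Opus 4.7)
The plan is short: combine Theorem \ref{T:O-conv} with the Naidu--Sastry duality stated just above the theorem, namely that $X$ is $E$-convex if and only if $X^*$ is $O$-convex. Given an $E$-convex space $X$, this duality forces $X^*$ to be $O$-convex, and Theorem \ref{T:O-conv} immediately delivers the fixed point property for $X$. Reflexivity of $E$-convex spaces, inherited from the superreflexivity of $O$-convex spaces (a fact invoked in the proof of Theorem \ref{T:O-conv}), ensures there is no gap between the weak fixed point property and the full fixed point property here.

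To see the duality and to identify where the real work sits, I would verify the direction actually needed: if $X^*$ fails to be $O$-convex, then $X$ fails to be $E$-convex. Suppose $\{f_1,\dots,f_n\}\subset B_{X^*}$ is symmetrically $(2-\eps)$-separated, so that for each $i\ne j$ both $\|f_i - f_j\|$ and $\|f_i + f_j\|$ exceed $2-\eps$. Choose witnesses $x_{ij}, y_{ij}\in B_X$ with $(f_i - f_j)(x_{ij}) > 2 - \eps$ and $(f_i + f_j)(y_{ij}) > 2 - \eps$; since each $\|f_k\| \le 1$, these inequalities force $f_i(y_{ij}), f_j(y_{ij}) > 1 - \eps$ and $f_i(x_{ij}) > 1 - \eps$, $f_j(-x_{ij}) > 1 - \eps$. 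Setting
\[
A_i = \{\, x \in B_X : f_i(x) \ge 1 - \eps/2 \,\},
\]
each $A_i$ is convex and disjoint from $(1 - \eps/2) B_X$, hence an $(\eps/2)$-flat, while $y_{ij} \in A_i \cap A_j$ and $x_{ij} \in A_i \cap (-A_j)$ make $\{A_1,\dots,A_n\}$ a jointly complemented family, witnessing the failure of $E$-convexity of $X$.

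The converse direction of the duality, not strictly needed for the theorem, is a routine Hahn--Banach separation: each $\eps$-flat in $B_X$ is strictly separated from $(1-\eps) B_X$ by a norm-one functional, and joint complementation translates into symmetric separation of these functionals. The only step I anticipate needing care with is calibrating the various $\eps$'s so that the flats produced above quantitatively witness the failure of $E$-convexity at a matching scale; but this is bookkeeping rather than a genuine obstacle. All of the substantive fixed-point theory has already been absorbed into Lemma \ref{L:numbers} and Theorem \ref{T:O-conv}, so the present theorem is essentially a translation of the previous one across the Naidu--Sastry dictionary.
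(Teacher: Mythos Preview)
Your proposal is correct and follows exactly the paper's route: invoke the Naidu--Sastry duality ($X$ is $E$-convex if and only if $X^*$ is $O$-convex) and then apply Theorem~\ref{T:O-conv}. The paper simply cites the duality rather than sketching it; in your sketch there is a harmless threshold slip (your witnesses satisfy $f_i(y_{ij})>1-\eps$, not $>1-\eps/2$, so take $A_i=\{x\in B_X: f_i(x)>1-\eps\}$ instead), which, as you say, is bookkeeping.
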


For a detailed
analysis of $O$-convex, $E$-convex, and related properties in the hierarchy between
Hilbert spaces and reflexive spaces, see \cite{AF, NS, SS}.
\bigskip

\flushleft
{\bf Acknowledgement.}  The authors thank  V.\ D.\ Milman, A.\ Pe{\l}czy{\'n}ski, N.~Randrianantoanina,
and M. Wojciechowski for helpful discussions during the preparation of this article.

\bibliographystyle{amsplain}

\end{document}